\begin{document}
\title{A note on the fourth moment of Dirichlet $L$-functions}
\author{H. M. Bui \and D. R. Heath-Brown}
\address{Mathematical Institute, University of Oxford, OXFORD, OX1 3LB}
\email{hung.bui@maths.ox.ac.uk}
\address{Mathematical Institute, University of Oxford, OXFORD, OX1 3LB}
\email{rhb@maths.ox.ac.uk}

\begin{abstract}
We prove an asymptotic formula for the fourth power mean of Dirichlet
$L$-functions averaged over primitive characters to modulus $q$ and over
$t\in[0,T]$ which is particularly effective when $q\ge T$. In this
range the correct order of magnitude was not previously known.
\end{abstract}
\maketitle

\section{Introduction}

For $\chi$ a Dirichlet character (mod $q$), the moments of $L(s,\chi)$
have many applications, for example to the distribution of primes in
the arithmetic progressions to modulus $q$. The asymptotic formula of
the fourth power moment in the $q$-aspect has been obtained by
Heath-Brown [\textbf{\ref{H-B1}}], for $q$ prime, and more recently by
Soundararajan
[\textbf{\ref{S}}] for general $q$. Following Soundararajan's work, 
Young [\textbf{\ref{Y}}] pushed the
result much further by computing the fourth moment for prime moduli
$q$ with a power saving in the error term. The problem essentially
reduces to the analysis of a particular divisor sum. To this end,
Young used various techniques to estimate the off-diagonal terms. 

In the case that the $t$-aspect is also included, a result of
Montgomery [\textbf{\ref{M}}] states that  
\begin{equation*}
\sum_{\chi(\textrm{mod}\ q)}{\!\!\!\!\!\!}^{\textstyle{*}}\
\int_{0}^{T}|L({\scriptstyle{\frac{1}{2}}}+it,\chi)|^4dt\ll\varphi(q)T(\log
qT)^4
\end{equation*}
for $q,T\ge 2$, where $\sum_{\chi \ \!\!(\textrm{mod}\ \!q)}^{*}$ 
indicates that the
sum is restricted to the primitive characters modulo $q$. As we shall
see, the upper bound is too large by a factor $(q/\varphi(q))^5$.  A
second result of relevance is due to Rane [\textbf{\ref{Rane}}].  After
correcting a misprint it states that
\begin{eqnarray*}
&&\lefteqn{\sum_{\chi(\textrm{mod}\ q)}{\!\!\!\!\!\!}^{\textstyle{*}}\
\int_{T}^{2T}|L({\scriptstyle{\frac{1}{2}}}+it,\chi)|^{4}dt}\\
&&\qquad\qquad=\frac{\varphi^{*}(q)T}{2\pi^{2}}\prod_{p|q}
\frac{(1-p^{-1})^3}{(1+p^{-1})}(\log 
qT)^{4} +O(2^{\omega(q)}\varphi^{*}(q)T(\log qT)^3(\log\log 3q)^5),
\end{eqnarray*}
where $\varphi^{*}(q)$ is the number of primitive characters modulo
$q$ and $\omega(q)$ is the number of distinct prime factors of $q$. 
This can only give an asymptotic relation when $2^{\omega(q)}\le\log
q$, which holds for some values of $q$, but not others.  Finally we
mention the work of Wang [\textbf{\ref{Wang}}], where an asymptotic
formula is proved for $q\le T^{1-\delta}$, for any fixed $\delta>0$.

The goal of
the present note is to establish an asymptotic formula, valid
for all $q,T\ge 2$, as soon as $q\rightarrow\infty$.

\newtheorem{theo}{Theorem}\begin{theo}
For $q,T\ge 2$ we have, in the notation above,
\begin{eqnarray*}
&&\sum_{\chi(\emph{mod}\ q)}{\!\!\!\!\!\!}^{\textstyle{*}}\
\int_{0}^{T}|L({\scriptstyle{\frac{1}{2}}}+it,\chi)|^{4}dt\\
&&\qquad=\bigg(1+O\bigg(\frac{\omega(q)}{\log
  q}\sqrt{\frac{q}{\varphi(q)}}\bigg)\bigg)\frac{\varphi^{*}(q)T}{2\pi^{2}}\prod_{p|q}
\frac{(1-p^{-1})^3}{(1+p^{-1})}(\log qT)^{4}
+O(qT(\log qT)^{\frac{7}{2}}).
\end{eqnarray*}
\end{theo}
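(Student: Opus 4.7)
\emph{Proof proposal.} The plan is to expand $|L(\tfrac12+it,\chi)|^4=|L(\tfrac12+it,\chi)^2|^2$ via an approximate functional equation for $L(\tfrac12+it,\chi)^2$, so that each factor becomes a Dirichlet polynomial with divisor coefficients of length roughly $q(|t|+1)$. Writing $|L^2|^2=L^2\cdot\overline{L^2}$ yields a double sum of the shape
\begin{equation*}
|L(\tfrac12+it,\chi)|^4\approx\sum_{m,n}\frac{d(m)d(n)\chi(m)\bar\chi(n)}{(mn)^{1/2}}\Big(\frac{n}{m}\Big)^{it}W(m,n;q,t)+\text{dual contributions},
\end{equation*}
where $W$ is a smooth cut-off weight.

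First I would invert the sum over primitive characters by writing $\sum^{*}_{\chi\bmod q}$ as a M\"obius combination of full sums $\sum_{\chi\bmod d}$ for $d\mid q$, and then apply orthogonality
\begin{equation*}
\sum_{\chi\bmod d}\chi(m)\bar\chi(n)=\varphi(d)\mathbf{1}[m\equiv n\,(d),\,(mn,d)=1].
\end{equation*}
This reduces the problem to sums over pairs $(m,n)$ tied by a congruence $m\equiv n\pmod{d}$ for various $d\mid q$. Next I would integrate over $t\in[0,T]$; after swapping the order of summation and integration this produces a smooth weight essentially of size $T$ when $|\log(m/n)|\ll 1/T$ and rapidly decaying otherwise.

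The diagonal $m=n$ delivers the main term: the weighted sum $\sum_{m\le qT,\,(m,q)=1}d(m)^{2}/m$ contributes the familiar $(2\pi^{2})^{-1}(\log qT)^{4}\prod_{p\mid q}(1-p^{-1})^{3}/(1+p^{-1})$ factor and, after the M\"obius inversion is resolved, the prefactor $\varphi^{*}(q)T$. A secondary main term of relative size $\omega(q)(\log q)^{-1}\sqrt{q/\varphi(q)}$ arises from the ``diagonal'' part of the cross-contributions between the principal and dual pieces of the approximate functional equation (the $\sqrt{q/\varphi(q)}$ coming from averaging Gauss sums $\tau(\chi)$ over primitive $\chi\bmod d$). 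For the off-diagonal with $m-n=jd\ne 0$, the congruence enforces $|m-n|\ge d$ while the $t$-integration enforces $|m-n|\ll n/T$, so the admissible triples $(j,m,n)$ live in a thin set; estimating the resulting divisor sums by elementary means (Shiu-type bounds for $\sum d(m)d(m+h)$, summed over $h$ and over $d\mid q$) should yield the claimed secondary error $O(qT(\log qT)^{7/2})$.

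The main obstacle is to arrange the off-diagonal analysis so that the bound is uniform in the ratio $q/T$, including the previously intractable range $q\ge T$, without invoking deep spectral or Kloosterman-sum machinery, and simultaneously to extract the secondary main term with the correct $\omega(q)/\log q$ and $\sqrt{q/\varphi(q)}$ dependence, thereby avoiding the factor $(q/\varphi(q))^{5}$ loss inherent in Montgomery's bound.
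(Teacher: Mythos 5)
Your outline diverges from the paper at the very first step, and the divergence creates obstacles that your sketch does not resolve. The paper does not use an approximate functional equation for $L(\tfrac12+it,\chi)^2$; it uses one for $|L(\tfrac12+it,\chi)|^2$ itself (Lemma 1 of the paper), writing it as $2\sum_{a,b}\chi(a)\overline{\chi(b)}(ab)^{-1/2}(a/b)^{-it}W_{\mathfrak a}(\pi ab/q;t)$. Because the object $L(s,\chi)L(\bar s,\overline\chi)$ is self-dual, \emph{no Gauss sums appear anywhere}: the whole problem reduces to a single bilinear form in $\chi(ac)\overline{\chi}(bd)$. Your version, based on $L^2$ and its dual, forces you to handle cross terms between the principal and dual pieces involving $\tau(\chi)^2$, and your claim that these produce the secondary term of relative size $\frac{\omega(q)}{\log q}\sqrt{q/\varphi(q)}$ is unsupported; in the paper that factor, and more importantly the half-integral exponent in $O(qT(\log qT)^{7/2})$, come from a completely different mechanism, namely a Cauchy--Schwarz step. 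The paper splits the bilinear form at $ab\le Z=qT/2^{\omega(q)}$ into a head $A(t,\chi)$ and tail $B(t,\chi)$, evaluates $\sum^*\int A^2$ asymptotically ($(\log qT)^4$), bounds $\sum^*\int B^2$ by dropping primitivity ($\ll \varphi(q)T(\log qT)^3$ plus a smaller term), and then the cross term $\sum^*\int AB$ is $\ll\sqrt{(\log)^4\cdot(\log)^3}=(\log qT)^{7/2}$. Your sketch contains no step that could produce a power $7/2$ of the logarithm; "elementary divisor-sum estimates" give integer powers, so the stated error term cannot emerge from the argument as described.

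The second gap is the off-diagonal bound itself, which you defer with "should yield." Your heuristic that the admissible pairs "live in a thin set" because the $t$-integration "enforces $|m-n|\ll n/T$" is too optimistic: $\int_0^T(m/n)^{it}\,dt$ is only $O(1/|\log(m/n)|)$, a slowly decaying weight over \emph{all} off-diagonal pairs, not a cutoff. One genuinely needs a divisor-correlation estimate of the shape $\sum_{n\le N}n\,d(n)d(kl+n)\ll N^2(\log N)^2\sum_{d|l}d^{-1}$ (the paper's Lemma 4, built on Heath-Brown's inequality (17)), summed over $l$ with weight $1/l$, and this already yields only $\frac{Z_1Z_2}{k}(\log Z_1Z_2)^3$ per dyadic block --- i.e.\ the off-diagonal loses merely one logarithm against the main term, so it is delicate, not thin. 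Moreover, uniformity in the range $q\ge T$ --- which you correctly flag as the main obstacle --- is achieved in the paper precisely by the choice $Z=qT/2^{\omega(q)}$ together with the restriction to $g\le 3^{\omega(q)}$ in the diagonal evaluation and the crude treatment of the tail $B$; none of these devices has an analogue in your outline. As it stands the proposal is a plausible plan for the diagonal main term but does not constitute a proof of either error term in the statement.
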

Our proof uses ideas from the works of 
Heath-Brown [\textbf{\ref{H-B1}}] and Soundararajan
[\textbf{\ref{S}}], but there is extra work to do to handle the
integration over $t$.
\newtheorem{rem}{Remark}
\begin{rem}
\emph{It is possible, with only a little more effort, to extend the 
range to cover all $T>0$. In this case the term $\varphi^*(q)T$ in the 
main term remains the same, as does the factor $qT$ in the error term, but one must replace $\log qT$ by $\log 
q(T+2)$ both in the main term and in the error term.} 
\end{rem}
\begin{rem}
\emph{One may readily verify that our result provides an asymptotic formula, as soon as $q\rightarrow\infty$,
with an error term which saves at least a factor $O((\log\log q)^{-1/2})$.}
\end{rem}
\begin{rem}
\emph{The literature appears not to contain a precise analogue of this for
  the second moment.  However Motohashi [\textbf{\ref{Mo}}] has
  considered a uniform mean value in $t$-aspect.  He
  proved that if $\chi$ is a primitive character modulo a prime $q$, then 
\begin{equation*}
\int_{0}^{T}|L({\scriptstyle{\frac{1}{2}}}+it,\chi)|^{2}dt=
\frac{\varphi(q)T}{q}\bigg(\log\frac{qT}{2\pi}+2\gamma+2\sum_{p|q}\frac{\log 
  p}{p-1}\bigg)+
O((q^{\frac{1}{3}}T^{\frac{1}{3}}+q^{\frac{1}{2}})(\log qT)^4), 
\end{equation*}
for $T\ge 2$. This provides an asymptotic formula when $q\le
T^{2-\delta}$, for any fixed $\delta>0$. Our theorem does not give a power
saving in the error term, but it yields an asymptotic formula without
any restrictions on $q$ and $T$.} 
\end{rem}

\section{Auxiliary lemmas}

\newtheorem{lemm}{Lemma}\begin{lemm}
Let $\chi$ be a primitive character $(\emph{mod}\ q)$ such that
$\chi(-1)=(-1)^{\mathfrak{a}}$ with $\mathfrak{a}=0$ or 1.  Then we have 
\begin{equation*}
|L({\scriptstyle{\frac{1}{2}}}+it,\chi)|^2=
2\sum_{a,b\geq1}\frac{\chi(a)\overline{\chi(b)}}
{\sqrt{ab}}\bigg(\frac{a}{b}\bigg)^{-it}W_{\mathfrak{a}}\bigg(\frac{\pi
  ab}{q};t\bigg), 
\end{equation*}
where
\begin{equation*}
W_{\mathfrak{a}}(x;t)=\frac{1}{2\pi
  i}\int_{(2)}
\frac{\Gamma(\frac{1}{4}+\frac{it}{2}+\frac{z}{2}+\frac{\mathfrak{a}}{2})
\Gamma(\frac{1}{4}-\frac{it}{2}+\frac{z}{2}+\frac{\mathfrak{a}}{2})}
{|\Gamma(\frac{1}{4}+\frac{it}{2}+\frac{\mathfrak{a}}{2})|^2}
e^{z^2}x^{-z}\frac{dz}{z}.
\end{equation*}
\end{lemm}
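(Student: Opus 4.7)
The plan is a standard contour-shift plus functional-equation argument. I would introduce
\begin{equation*}
\mathcal{J}(t):=\frac{1}{2\pi i}\int_{(2)} L(\tfrac{1}{2}+it+z,\chi)\,L(\tfrac{1}{2}-it+z,\overline{\chi})\,G(z,t)\,\Big(\frac{q}{\pi}\Big)^{z}e^{z^2}\frac{dz}{z},
\end{equation*}
where $G(z,t)$ denotes the gamma quotient appearing in the integrand of $W_{\mathfrak{a}}$. On $\mathrm{Re}(z)=2$ both $L$-functions have absolutely convergent Dirichlet series, so expanding them, interchanging sum and integral (justified by the rapid decay of $e^{z^2}$), and recognising the resulting inner contour integral as $W_{\mathfrak{a}}(\pi ab/q;t)$ identifies $\mathcal{J}(t)$ with the right-hand side of the lemma divided by $2$. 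It therefore suffices to prove $2\mathcal{J}(t)=|L(\tfrac{1}{2}+it,\chi)|^2$.

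For this, I would shift the contour from $\mathrm{Re}(z)=2$ to $\mathrm{Re}(z)=-2$. Since $\chi$ is primitive of modulus $q>1$, both $L(\cdot,\chi)$ and $L(\cdot,\overline{\chi})$ are entire, so the only pole crossed is the simple pole of $1/z$ at $z=0$; because $G(0,t)=1$ by construction, its residue is precisely $|L(\tfrac{1}{2}+it,\chi)|^2$. This gives $\mathcal{J}(t)=|L(\tfrac{1}{2}+it,\chi)|^2+\mathcal{J}'(t)$, where $\mathcal{J}'(t)$ is the integral along $\mathrm{Re}(z)=-2$. The key step is to show $\mathcal{J}'(t)=-\mathcal{J}(t)$. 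To that end I would apply the functional equation
\begin{equation*}
L(s,\chi)=\varepsilon(\chi)\Big(\frac{q}{\pi}\Big)^{1/2-s}\frac{\Gamma((1-s+\mathfrak{a})/2)}{\Gamma((s+\mathfrak{a})/2)}L(1-s,\overline{\chi})
\end{equation*}
to both $L$-factors on the shifted line: the root numbers multiply to $|\varepsilon(\chi)|^2=1$, the $(q/\pi)^{1-2s}$ factors flip the weight $(q/\pi)^{z}$ to $(q/\pi)^{-z}$, and a change of variable $z\mapsto -z$ (which reverses orientation and contributes an overall sign) returns the integral to a form on $\mathrm{Re}(z)=2$ identical with $\mathcal{J}(t)$, provided that $G(z,t)$ multiplied by the gamma ratio from the functional equation equals $G(-z,t)$.

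This last identity is built into the definition of $W_{\mathfrak{a}}$: the $|\Gamma(\tfrac{1}{4}+\tfrac{it}{2}+\tfrac{\mathfrak{a}}{2})|^2$ denominators match trivially, and the four numerator gamma factors cancel pairwise so that the $z/2$ signs flip, producing $G(-z,t)$. The same normalisation gives $G(0,t)=1$, using $|\Gamma(\tfrac{1}{4}+\tfrac{it}{2}+\tfrac{\mathfrak{a}}{2})|^2=\Gamma(\tfrac{1}{4}+\tfrac{it}{2}+\tfrac{\mathfrak{a}}{2})\Gamma(\tfrac{1}{4}-\tfrac{it}{2}+\tfrac{\mathfrak{a}}{2})$. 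There is no genuine analytic obstacle: the $e^{z^2}$ weight ensures absolute convergence and validates every interchange and contour shift. The only real risk is a sign or gamma-factor bookkeeping error in the functional-equation step, which is the one place that requires careful algebraic verification.
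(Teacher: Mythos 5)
Your proposal is correct and is essentially the paper's own argument: the paper defines the same contour integral (written in terms of the completed function $\Lambda$, which packages your $(q/\pi)^{z}$ and gamma factors), shifts from $\Re z=2$ to $\Re z=-2$, picks up the residue $|L(\frac{1}{2}+it,\chi)|^2$ at $z=0$, and uses the functional equation $\Lambda(\frac12+s,\chi)=\frac{\tau(\chi)}{i^{\mathfrak a}\sqrt q}\Lambda(\frac12-s,\overline\chi)$ together with $z\mapsto -z$ to show the shifted integral equals minus the original. Your use of the asymmetric form of the functional equation instead of the symmetric $\Lambda$-form is only a cosmetic difference.
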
 
\begin{proof}
Let
\begin{equation*}
I:=\frac{1}{2\pi i}\int_{(2)}
\frac{\Lambda({\scriptstyle{\frac{1}{2}}}+it+z,\chi)
\Lambda({\scriptstyle{\frac{1}{2}}}-it+z,\overline\chi)}
{|\Gamma(\frac{1}{4}+\frac{it}{2}+\frac{\mathfrak{a}}{2})|^2}e^{z^2}
\frac{dz}{z}, 
\end{equation*}
where
\begin{equation*}
\Lambda({\scriptstyle{\frac{1}{2}}}+s,\chi)=
\bigg(\frac{q}{\pi}\bigg)^{s/2}\Gamma
\bigg(\frac{1}{4}+\frac{s}{2}+\frac{\mathfrak{a}}{2}\bigg)
L({\scriptstyle{\frac{1}{2}}}+s,\chi).
\end{equation*}
We recall the functional equation
\begin{equation*}
\Lambda({\scriptstyle{\frac{1}{2}}}+s,\chi)=
\frac{\tau(\chi)}{i^{\mathfrak{a}}\sqrt{q}}
\Lambda({\scriptstyle{\frac{1}{2}}}-s,\overline{\chi}).
\end{equation*}
Hence, moving the line of integration to $\Re z=-2$ and applying
Cauchy's Theorem, we obtain $|L(\frac{1}{2}+it,\chi)|^2=2I$. Finally,
expanding
$L({\scriptstyle{\frac{1}{2}}}+it+z,\chi)
L({\scriptstyle{\frac{1}{2}}}-it+z,\overline\chi)$
in a Dirichlet series and integrating termwise we obtain the lemma. 
\end{proof}

We decompose $|L(\frac{1}{2}+it,\chi)|^2$ as $2(A(t,\chi)+B(t,\chi))$, where
\begin{equation*}
A(t,\chi)=\sum_{ab\leq Z}\frac{\chi(a)\overline{\chi(b)}}{\sqrt{ab}}
\bigg(\frac{a}{b}\bigg)^{-it}W_{\mathfrak{a}}\bigg(\frac{\pi ab}{q};t\bigg),
\end{equation*}
and
\begin{equation*}
B(t,\chi)=\sum_{ab>Z}\frac{\chi(a)\overline{\chi(b)}}{\sqrt{ab}}
\bigg(\frac{a}{b}\bigg)^{-it}W_{\mathfrak{a}}\bigg(\frac{\pi ab}{q};t\bigg),
\end{equation*}
with $Z=qT/2^{\omega(q)}$. In the next two sections, we evaluate the
second moments of $A(t,\chi)$ and $B(t,\chi)$ after which our theorem 
will be an easy consequence. 

The function $W_{\mathfrak{a}}(x;t)$ approximates the characteristic
function of the interval $[0,|t|]$. Indeed, we have the following. 

\begin{lemm}
The function $W_{\mathfrak{a}}(x;t)$ satisfies
\begin{equation*}
W_{\mathfrak{a}}(x;t)=\left\{ \begin{array}{ll}
O((\tau/x)^{2}) &\qquad \textrm{for }\;\; x\geq \tau,\\
1+O((x/\tau)^{1/4}) &\qquad \textrm{for }\;\; 0<x<\tau,
\end{array} \right.
\end{equation*}
and
\begin{equation*}
\frac{\partial}{\partial t}W_{\mathfrak{a}}(x;t)\ll
\left\{ \begin{array}{ll}
\tau^{-1}(\tau/x)^{2} &\qquad \textrm{for }\;\; x\geq \tau,\\
\tau^{-1}(x/\tau)^{1/4} &\qquad \textrm{for }\;\; 0<x<\tau,
\end{array} \right.
\end{equation*}
where $\tau=|t|+2$.
\end{lemm}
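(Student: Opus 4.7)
The plan is to combine Stirling's approximation for the Gamma function with contour shifts. By Stirling's formula, for $|z|$ bounded---which is effectively the case thanks to the Gaussian decay of $e^{z^2}$ along vertical lines---the ratio
\begin{equation*}
\frac{\Gamma(\frac{1}{4}+\frac{it}{2}+\frac{z}{2}+\frac{\mathfrak{a}}{2})\Gamma(\frac{1}{4}-\frac{it}{2}+\frac{z}{2}+\frac{\mathfrak{a}}{2})}{|\Gamma(\frac{1}{4}+\frac{it}{2}+\frac{\mathfrak{a}}{2})|^2}
\end{equation*}
is of order $\tau^{\Re z}$ uniformly in $t$ for $\Re z$ restricted to a compact set, reflecting the polynomial growth of Gamma along vertical lines of fixed height.

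For the bound when $x\ge\tau$, I would keep the contour at $\Re z=2$ as given. Then $|x^{-z}|=x^{-2}$, the Gamma ratio contributes $O(\tau^2)$, and $|e^{z^2}|=e^{4-(\Im z)^2}$ supplies Gaussian decay in $\Im z$, so the integral is $O((\tau/x)^2)$. For the bound when $0<x<\tau$, I would shift the contour leftward to $\Re z=-\tfrac{1}{4}$. The poles of the two numerator Gamma factors occur at $z=-\tfrac{1}{2}-\mathfrak{a}-2n\mp it$ for $n\ge 0$, whose real parts are at most $-\tfrac{1}{2}$, hence none of them enter the strip $-\tfrac{1}{4}\le\Re z\le 2$. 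The only singularity crossed is the simple pole of $1/z$ at $z=0$, whose residue equals $1$ since the Gamma ratio evaluates to $1$ there. On the shifted contour the integrand is $O(\tau^{-1/4}x^{1/4})$ against a Gaussian in $\Im z$, which integrates to $O((x/\tau)^{1/4})$.

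For the derivative estimates, I would differentiate under the integral sign. Writing $s_1=\tfrac{1}{4}+\tfrac{it}{2}+\tfrac{\mathfrak{a}}{2}$ and $s_2=\overline{s_1}$, the logarithmic derivative identity
\begin{equation*}
\frac{\partial}{\partial t}\log\frac{\Gamma(s_1+z/2)\Gamma(s_2+z/2)}{|\Gamma(s_1)|^2}=\frac{i}{2}\bigl[(\psi(s_1+z/2)-\psi(s_1))-(\psi(s_2+z/2)-\psi(s_2))\bigr]
\end{equation*}
holds, where $\psi$ is the digamma function. Since $\psi(s+w)-\psi(s)\ll|w|/|s|$ whenever $|s|$ is large and $|w/s|$ is small, each bracketed difference is $O(|z|/\tau)$ on the contours we use. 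Differentiating the Gamma ratio therefore introduces a factor $O(|z|/\tau)$, and since $|z|$ is effectively $O(1)$ against the Gaussian, rerunning the two contour-shift arguments yields precisely the claimed $\tau^{-1}$ savings.

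The main technical point is obtaining Stirling's bound uniformly in $z$ over the strips considered, together with confirming the pole structure when the contour is moved past $z=0$; both are routine but require some care near the imaginary parts $\Im z \sim t$ where the Gamma functions could a priori behave unpleasantly (though they are shielded by the Gaussian).
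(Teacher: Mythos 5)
Your proposal is correct and follows essentially the same route as the paper: Stirling's formula on the line $\Re z=2$ for the range $x\ge\tau$, a shift to $\Re z=-\frac{1}{4}$ picking up the residue $1$ at $z=0$ for the range $0<x<\tau$, and differentiation under the integral sign combined with digamma asymptotics (equivalent to the paper's $\Gamma'(w)/\Gamma(w)=\log w+O(|w|^{-1})$) for the derivative bounds. Your identification of the Gamma poles at $\Re z\le-\frac{1}{2}-\mathfrak{a}$ and the cancellation of the $1/z$ pole after differentiation are exactly the points the paper leaves implicit.
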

\begin{proof}
The first estimate is a direct consequence of Stirling's formula,
while for the second one merely shifts the line of
integration to $\Re z=-1/4$ before employing Stirling's formula.
To handle the derivative one proceeds as before, differentiates under the 
integral sign and uses the estimate
\[\frac{\Gamma'(w)}{\Gamma(w)}=\log w+O(|w|^{-1}),\]
which holds for $1/8\le\Re w\le 2$ 
\end{proof}

The next lemma concerns the orthogonality of primitive Dirichlet
characters.

\begin{lemm}
For $(mn,q)=1$, we have
\begin{equation*}
\sum_{\chi(\emph{mod}\ q)}{\!\!\!\!\!\!}^{\textstyle{*}}\ \ 
\chi(m)\overline\chi(n)=\sum_{k|(q,m-n)}\varphi(k)\mu(q/k).
\end{equation*}
Moreover
\begin{equation*}
\sum_{\substack{\chi(\emph{mod}\ q)\\\chi(-1)=
(-1)^{\mathfrak{a}}}}{\!\!\!\!\!\!\!\!\!\!}^{\textstyle{*}}\ \ 
\chi(m)\overline\chi(n)=\frac{1}{2}\sum_{k|(q,m-n)}
\varphi(k)\mu(q/k)+\frac{(-1)^{\mathfrak{a}}}{2}
\sum_{k|(q,m+n)}\varphi(k)\mu(q/k).
\end{equation*}
\end{lemm}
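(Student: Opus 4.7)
\medskip

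The plan is to derive the first identity from ordinary orthogonality of Dirichlet characters combined with Möbius inversion, and then to obtain the second identity by separating characters according to parity.

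For the first part, I would start from the fact that every character $\chi \pmod q$ is induced from a unique primitive character $\chi^\star$ of some conductor $d \mid q$, so
\begin{equation*}
\sum_{\chi(\textrm{mod}\ q)}\chi(m)\overline{\chi(n)}
= \sum_{d\mid q}\sum_{\chi(\textrm{mod}\ d)}{\!\!\!\!\!\!}^{\textstyle{*}}\ \chi(m)\overline{\chi(n)},
\end{equation*}
valid when $(mn,q)=1$ (so the induced characters are non-vanishing at $m$ and $n$). Möbius inversion on the divisor lattice of $q$ then gives
\begin{equation*}
\sum_{\chi(\textrm{mod}\ q)}{\!\!\!\!\!\!}^{\textstyle{*}}\ \chi(m)\overline{\chi(n)}
= \sum_{d\mid q}\mu(q/d)\sum_{\chi(\textrm{mod}\ d)}\chi(m)\overline{\chi(n)}.
\end{equation*}
The inner full sum equals $\varphi(d)$ when $d\mid (m-n)$ and is $0$ otherwise (using $(mn,q)=1$, which forces $(mn,d)=1$). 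Restricting $d$ to divisors of both $q$ and $m-n$, i.e.\ $d\mid(q,m-n)$, yields the claimed formula with $k=d$.

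For the second part, I would use the standard parity decomposition. Writing $S^{\pm}:=\sum_{\chi(-1)=\pm 1}^{\ast}\chi(m)\overline{\chi(n)}$ and observing that $\chi(-m)\overline{\chi(n)}=\chi(-1)\chi(m)\overline{\chi(n)}$, we get the two linear relations
\begin{equation*}
S^{+}+S^{-}=\sum_{\chi(\textrm{mod}\ q)}{\!\!\!\!\!\!}^{\textstyle{*}}\ \chi(m)\overline{\chi(n)},\qquad
S^{+}-S^{-}=\sum_{\chi(\textrm{mod}\ q)}{\!\!\!\!\!\!}^{\textstyle{*}}\ \chi(-m)\overline{\chi(n)}.
\end{equation*}
Solving for $S^{\pm}$ and applying the first part of the lemma (with $-m$ in place of $m$ for the second sum, which is legitimate since $((-m)n,q)=1$, and noting $k\mid(-m-n)$ iff $k\mid(m+n)$) gives exactly
\begin{equation*}
S^{(-1)^{\mathfrak{a}}}=\tfrac{1}{2}\sum_{k\mid(q,m-n)}\varphi(k)\mu(q/k)
+\tfrac{(-1)^{\mathfrak{a}}}{2}\sum_{k\mid(q,m+n)}\varphi(k)\mu(q/k),
\end{equation*}
as required.

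There is no real obstacle here; the argument is essentially bookkeeping. The only point to be careful with is maintaining the coprimality hypothesis when substituting $-m$ for $m$ in the first identity (immediate from $(mn,q)=1$) and correctly tracking the sign $(-1)^{\mathfrak{a}}$ through the parity decomposition.
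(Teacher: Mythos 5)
Your proof is correct and is essentially the standard argument: the paper itself gives no details, simply citing Heath-Brown [page 27], where the identity is obtained by exactly this Möbius inversion over conductors followed by the parity decomposition $\chi(-1)\chi(m)=\chi(-m)$. Nothing further is needed.
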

\begin{proof}
This follows from [\textbf{\ref{H-B1}}; page 27]. 
\end{proof}

To handle the off-diagonal term we shall use the following bounds.
\begin{lemm}
Let $k$ be a positive integer and $Z_1,Z_2\geq2$. If 
$Z_1Z_2\leq k^{\frac{19}{10}}$ then
\begin{equation*}
E:=\sum_{\substack{Z_{1}\leq ab<2Z_{1}\\Z_{2}\leq
    cd<2Z_{2}\\ac\equiv\pm bd(\emph{mod}\ k)\\ac\ne bd\\(abcd,k)=1}}
\frac{1}{|\log\frac{ac}{bd}|}\ll\frac{(Z_{1}Z_{2})^{1+\varepsilon}}{k}
\end{equation*}
for any fixed $\varepsilon>0$, while if $Z_{1}Z_{2}>k^{\frac{19}{10}}$ then
\begin{equation}\label{2}
E\ll\frac{Z_{1}Z_{2}}{k}(\log Z_{1}Z_{2})^3.
\end{equation} 
\end{lemm}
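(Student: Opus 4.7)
The plan is to exploit two elementary observations. First, for $ac\neq bd$ one has
\[
\frac{1}{|\log(ac/bd)|}\leq \frac{\max(ac,bd)}{|ac-bd|},
\]
since $|\log(u/v)|\geq |u-v|/\max(u,v)$. Second, the congruence $ac\equiv\pm bd\pmod k$ together with $ac\neq bd$ forces $|ac\mp bd|\geq k$, as $ac\mp bd$ is a nonzero multiple of $k$. Writing $ac\mp bd=hk$ with $h\in\mathbb{Z}\setminus\{0\}$, I would rewrite $E$ as a sum over $h$ of a shifted counting problem for quadruples $(a,b,c,d)$.

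I would then decompose dyadically by $ac\sim U$ and $bd\sim V$, noting $UV=abcd\asymp Z_1Z_2$, and split into a close regime where $U\asymp V\asymp\sqrt{Z_1Z_2}$, and a separated regime where $\max(U,V)\geq 2\min(U,V)$. In the close regime the inequality above gives $|\log(ac/bd)|^{-1}\ll \sqrt{Z_1Z_2}/(|h|k)$, and the classical shifted divisor bound
\[
\sum_{n\sim N}d(n)d(n+hk)\ll N(\log N)^2
\]
applied with $N\asymp\sqrt{Z_1Z_2}$ controls the inner count; summing over $1\leq|h|\ll\sqrt{Z_1Z_2}/k$ picks up an additional $\log$ from harmonic summation, for a total of $O(Z_1Z_2(\log Z_1Z_2)^3/k)$.

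In the separated regime $|\log(ac/bd)|^{-1}=O(1)$, and the dyadic blocks with $\max(U,V)<k/4$ are empty, since then $|ac\mp bd|\leq 4\max(U,V)<k$ would force $h=0$. For the surviving blocks I would count the pairs $(u,v)$ with $u\sim U$, $v\sim V$, $u\equiv\pm v\pmod k$ via the estimate for divisor sums in arithmetic progressions modulo $k$; in the range $Z_1Z_2>k^{19/10}$ this delivers $O(Z_1Z_2(\log Z_1Z_2)^3/k)$. For $Z_1Z_2\leq k^{19/10}$ the sharp divisor-in-AP estimate is too weak for a purely logarithmic result, so I would absorb the loss into a factor $(Z_1Z_2)^\varepsilon$, using the trivial estimate $d(n)\ll n^\varepsilon$ throughout together with the $1/k$ saving from the congruence to obtain $O((Z_1Z_2)^{1+\varepsilon}/k)$.

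The main obstacle I foresee is the treatment of the marginal dyadic blocks where $\min(U,V)\asymp k$: there the shifted divisor bound still gives $O(N\log^2 N)$, but the divisor-in-AP bound degrades because the progression contains only $O(1)$ terms in the relevant window. The threshold exponent $19/10$ emerges from balancing these two losses against each other.
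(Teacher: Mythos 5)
Your treatment of the close regime is essentially the paper's: there the authors restrict by symmetry to $bd<ac\le 2bd$, write $ac=bd+lk$, note $\log(ac/bd)\gg lk/bd$ with $bd\ll\sqrt{Z_1Z_2}$, and invoke Heath-Brown's shifted divisor bound $\sum_{n\le x}d(n)d(n+h)\ll x(\log x)^2\sum_{d\mid h}d^{-1}$, the sum over $l$ supplying the third logarithm. One caveat even here: when the congruence in force is $ac\equiv-bd\pmod k$, your parameter $h$ satisfies $hk=ac+bd$, not $ac-bd$, so the inequality $|\log(ac/bd)|^{-1}\ll\sqrt{Z_1Z_2}/(|h|k)$ does not apply and that subcase needs a separate (though analogous) computation; the paper waves at this with ``the alternative case being dealt with similarly.''

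The genuine gap is in the separated regime. Counting pairs $(u,v)=(ac,bd)$ in a dyadic box $u\sim U$, $v\sim V$ subject to $u\equiv\pm v\pmod k$ costs at least $UV/k+\min(U,V)$ before divisor weights, since each of the $\min(U,V)$ values of $v$ contributes a ``$+1$'' from the arithmetic progression; summed over blocks this additive term is already $\gg(Z_1Z_2)^{1/2}$, and with the divisor weights your method yields an error of roughly $(Z_1Z_2)^{3/4+\varepsilon}$. But in the range $Z_1Z_2\le k^{19/10}$ one has $k\ge(Z_1Z_2)^{10/19}>(Z_1Z_2)^{1/2}$, so the target $(Z_1Z_2)^{1+\varepsilon}/k$ is \emph{smaller} than $(Z_1Z_2)^{1/2}$ throughout that range: the loss is a positive power of $Z_1Z_2$, and no factor $(Z_1Z_2)^{\varepsilon}$ can absorb it. (Even in the second case your separated-regime error only drops below $Z_1Z_2/k$ once $Z_1Z_2\gg k^{4}$, not at $k^{19/10}$.) The missing idea is to exploit the hyperbola constraint $ac\cdot bd=(ab)(cd)\asymp Z_1Z_2$ instead of a box: writing $ac\mp bd=hk$ with $h\ne 0$, for each fixed $h$ the smaller of $ac,bd$ is confined to an interval of length $O(Z_1Z_2/(|h|k))$ with \emph{no} additive term (there are simply no solutions once $|h|k\gg Z_1Z_2$), so the number of admissible pairs is $O(k^{-1}Z_1Z_2\log Z_1Z_2)$ after summing over $h$, and multiplying by $d(ac)d(bd)\ll(Z_1Z_2)^{\varepsilon}$ gives the required $O((Z_1Z_2)^{1+\varepsilon}/k)$. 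This is precisely the content of Soundararajan's Lemma 3, which the paper simply cites for all terms with $|\log(ac/bd)|>\log 2$ rather than reproving.
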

\begin{proof}
We note that in each case the contribution of the terms with
$|\log ac/bd|>\log 2$ is satisfactory, by the corresponding lemma of
Soundararajan
[\textbf{\ref{S}}; Lemma 3]. Thus, by symmetry, it is enough to
consider the terms with $bd<ac\leq2bd$. We shall show how to handle
the terms in which $ac\equiv bd\,({\rm mod}\ k)$, the alternative case
being dealt with similarly.  We write $n=bd$ and $ac=kl+bd$ and
observe that $kl\leq bd$. We deduce that $n\leq2\sqrt{Z_1Z_2}$ and
$1\leq l\leq2\sqrt{Z_1Z_2}/k$.  Since $\log ac/bd\gg kl/n$ 
the contribution of these terms to $E$ is 
\begin{equation*}
\ll\frac{1}{k}\sum_{l\leq 2\sqrt{Z_{1}Z_{2}}/k}
\frac{1}{l}\sum_{\substack{n\leq2\sqrt{Z_{1}Z_{2}}\\(n,k)=1}}nd(n)d(kl+n).
\end{equation*}
We estimate the sum over $n$ using a bound from Heath-Brown's
paper [\textbf{\ref{H-B1}}; (17)]. This shows that the above expression is 
\begin{equation*}
\ll\frac{Z_1Z_2(\log Z_1Z_2)^2}{k}\sum_{l\leq 2\sqrt{Z_{1}Z_{2}}/k}
\frac{1}{l}\sum_{d|l}d^{-1}\ll\frac{Z_1Z_2}{k}(\log Z_1Z_2)^3.
\end{equation*}
This suffices to complete the proof.  The reader will observe that
when $Z_1Z_2\leq k^{\frac{19}{10}}$ it
is only the terms with $|\log ac/bd|>\log 2$ which prevent us from
achieving the bound \eqref{2}.
\end{proof}

Finally we shall require the following two lemmas [\textbf{\ref{S}};
Lemmas 4 and 5].

\begin{lemm}
For $q\ge 2$ we have
\begin{equation*}
\sum_{\substack{n\leq x\\(n,q)=1}}\frac{1}{n}=
\frac{\varphi(q)}{q}\big(\log x+
O\big(1+\log \omega(q)\big)\big)+O\bigg(\frac{2^{\omega(q)}\log x}{x}\bigg).
\end{equation*}
\end{lemm}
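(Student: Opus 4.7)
My plan is to prove the estimate by the classical Möbius inversion approach. Writing the coprimality condition as $\sum_{d\mid(n,q)}\mu(d)$ and swapping the order of summation gives
\[
\sum_{\substack{n\le x\\(n,q)=1}}\frac{1}{n}=\sum_{\substack{d\mid q\\d\le x}}\frac{\mu(d)}{d}\sum_{m\le x/d}\frac{1}{m},
\]
and then I would insert the standard asymptotic $\sum_{m\le y}1/m=\log y+\gamma+O(1/y)$, valid for $y\ge1$. After extending the outer sum to all $d\mid q$ (controlling the truncation error separately), the main terms collapse to
\[
\frac{\varphi(q)}{q}(\log x+\gamma)-\sum_{d\mid q}\frac{\mu(d)\log d}{d},
\]
using $\sum_{d\mid q}\mu(d)/d=\varphi(q)/q$.

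Next I would evaluate the second sum by differentiating the Euler product. Setting $F(s)=\sum_{d\mid q}\mu(d)d^{-s}=\prod_{p\mid q}(1-p^{-s})$, logarithmic differentiation at $s=1$ yields
\[
-\sum_{d\mid q}\frac{\mu(d)\log d}{d}=F'(1)=\frac{\varphi(q)}{q}\sum_{p\mid q}\frac{\log p}{p-1}.
\]
The key estimate is then $\sum_{p\mid q}(\log p)/(p-1)\ll1+\log\omega(q)$. This follows because the sum is largest when the primes dividing $q$ are the first $\omega(q)$ primes; bounding the $k$-th prime by $k\log k$ (Chebyshev/PNT) and applying Mertens' theorem $\sum_{p\le y}(\log p)/p=\log y+O(1)$ gives $O(\log\omega(q))$, absorbed into $O(1+\log\omega(q))$. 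Combined with $\gamma=O(1)$, this produces the stated main term.

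Finally I need to verify that the error terms are of the claimed size $O(2^{\omega(q)}\log x/x)$. The $O(1/y)$ contribution from the harmonic asymptotic yields
\[
\sum_{\substack{d\mid q\\d\le x}}\frac{|\mu(d)|}{d}\cdot\frac{d}{x}\ll\frac{2^{\omega(q)}}{x},
\]
which is acceptable. For the truncation error one may assume $q\ge x$ (else the tail is empty), and then bound the tail contributions by using $1/d<1/x$ for $d>x$ together with the fact that at most $2^{\omega(q)}$ squarefree divisors of $q$ occur; the factor $\log x$ enters through the coefficient $\log x\cdot\sum_{d\mid q,\,d>x}\mu(d)/d$, and a similar manipulation (splitting into dyadic ranges in $d$, or using $\log d/d\ll\log x/x$ for $d\ge x$ in the regime that matters) handles the $\sum\mu(d)\log d/d$ tail. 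I expect the main obstacle to be this last piece: keeping the error of shape $2^{\omega(q)}\log x/x$ rather than the cruder $2^{\omega(q)}\log q/x$ requires a slightly careful dyadic decomposition of the tail, though no deep idea is involved.
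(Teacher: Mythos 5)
Your proof is correct. Note that the paper does not prove this statement itself; it is quoted verbatim from Soundararajan [S, Lemma 4], and your M\"obius-inversion argument (write the coprimality condition as $\sum_{d\mid (n,q)}\mu(d)$, insert $\sum_{m\le y}1/m=\log y+\gamma+O(1/y)$, identify $-\sum_{d\mid q}\mu(d)\log d/d$ with $\frac{\varphi(q)}{q}\sum_{p\mid q}\frac{\log p}{p-1}\ll\frac{\varphi(q)}{q}(1+\log\omega(q))$ via the worst case of the first $\omega(q)$ primes) is exactly the standard route taken there. The only comment worth making is that your anticipated difficulty with the truncated tail is illusory: no dyadic decomposition is needed, since $t\mapsto (\log t)/t$ is decreasing for $t\ge e$, so each divisor $d>x$ contributes $\ll(\log x)/x$ directly (and in fact $\log(d/x)/d\le 1/(ex)$, so the tail is even $O(2^{\omega(q)}/x)$).
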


\begin{lemm}
For $x\geq\sqrt{q}$ we have
\begin{equation*}
\sum_{\substack{n\leq x\\(n,q)=1}}\frac{2^{\omega(n)}}{n}\ll
\bigg(\frac{\varphi(q)}{q}\bigg)^2(\log x)^2,
\end{equation*}
and
\begin{equation*}
\sum_{\substack{n\leq x\\(n,q)=1}}\frac{2^{\omega(n)}}{n}
\bigg(\log\frac{x}{n}\bigg)^2=
\bigg(1+O\bigg(\frac{1+\log \omega(q)}{\log q}\bigg)\bigg)
\frac{(\log x)^4}{12\zeta(2)}\prod_{p|q}\frac{1-1/p}{1+1/p}.
\end{equation*}
\end{lemm}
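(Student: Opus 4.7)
The plan is to analyze both sums through the Dirichlet series
\[
F(s) := \sum_{\substack{n\ge 1\\(n,q)=1}}\frac{2^{\omega(n)}}{n^{s}} = \frac{\zeta(s)^{2}}{\zeta(2s)}\prod_{p\mid q}\frac{1-p^{-s}}{1+p^{-s}},
\]
which has a double pole at $s=1$; the full $q$-dependence is encoded in the finite Euler product over $p\mid q$. Both bounds then come from Mellin inversion and a contour shift, with the lemmas quoted earlier in this section supplying the elementary estimates.

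For the first inequality I would use $2^{\omega(n)}=\sum_{d\mid n}\mu^{2}(d)$ to write
\[
\sum_{\substack{n\le x\\(n,q)=1}}\frac{2^{\omega(n)}}{n} = \sum_{\substack{d\le x\\(d,q)=1}}\frac{\mu^{2}(d)}{d}\sum_{\substack{m\le x/d\\(m,q)=1}}\frac{1}{m}.
\]
Lemma 5 handles the inner sum; for the outer squarefree sum $\sum_{d\le x,(d,q)=1}\mu^{2}(d)/d$ the corresponding series $\sum_{(d,q)=1}\mu^{2}(d)/d^{s}=(\zeta(s)/\zeta(2s))\prod_{p\mid q}(1+p^{-s})^{-1}$ gives the asymptotic $\sim(\log x)/\zeta(2)\cdot\prod_{p\mid q}(1+p^{-1})^{-1}$ by the same method. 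The hypothesis $x\ge\sqrt{q}$ ensures the secondary term $2^{\omega(q)}\log x/x$ from Lemma 5 is absorbed. Multiplying out one gets a bound proportional to $\prod_{p\mid q}(1-p^{-1})/(1+p^{-1})$, which by the factorwise inequality $(1-p^{-1})/(1+p^{-1})\le\zeta(2)\cdot(1-p^{-1})^{2}$ is $\ll(\varphi(q)/q)^{2}(\log x)^{2}$, as required.

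For the second identity the natural route is Perron: start from
\[
\sum_{\substack{n\le x\\(n,q)=1}}\frac{2^{\omega(n)}}{n}\Bigl(\log\frac{x}{n}\Bigr)^{2} = \frac{1}{\pi i}\int_{(c)}F(s+1)\frac{x^{s}}{s^{3}}\,ds,
\]
with $c=1/\log x$, and shift the contour past the pole of order five at $s=0$. Expanding
\[
\zeta(s+1)^{2}=s^{-2}+2\gamma s^{-1}+O(1),\qquad \zeta(2s+2)^{-1}=\zeta(2)^{-1}+O(s),
\]
\[
\prod_{p\mid q}\frac{1-p^{-s-1}}{1+p^{-s-1}} = \prod_{p\mid q}\frac{1-p^{-1}}{1+p^{-1}}\cdot\Bigl(1+s\sum_{p\mid q}\frac{2p\log p}{p^{2}-1}+O(s^{2})\Bigr),
\]
and $x^{s}=\sum_{k\ge 0}(s\log x)^{k}/k!$, and collecting the residue, the coefficient of $s^{4}$ in the bracketed product yields the claimed main term $(\log x)^{4}/(12\zeta(2))\cdot\prod_{p\mid q}(1-p^{-1})/(1+p^{-1})$, while the coefficient of $s^{3}$ gives a secondary term of size $(\log x)^{3}\cdot(1+\sum_{p\mid q}(\log p)/p)$ times that same local product.

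The key (and essentially the only non-routine) point is the bound
\[
\sum_{p\mid q}\frac{\log p}{p-1}\ll 1+\log\omega(q),
\]
which is saturated by $q$ a primorial $p_{1}\cdots p_{k}$ and follows from the prime number theorem via $p_{k}\asymp k\log k$. Dividing the secondary term by the main term then yields the relative error $O((1+\log\omega(q))/\log x)=O((1+\log\omega(q))/\log q)$, where $x\ge\sqrt{q}$ is used in the last step. The truncation error in Perron and the contribution from the vertical line $\Re s=-1/\log x$ are controlled by standard convexity bounds for $\zeta(s+1)/\zeta(2s+2)$ and the rapid decay of $1/s^{3}$ in the $t$-aspect, and present no further difficulty.
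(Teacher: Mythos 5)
First, note that the paper does not prove this lemma at all: it is quoted verbatim from Soundararajan [S, Lemmas 4 and 5], whose proofs are elementary (for the first bound one uses $2^{\omega(n)}\le d(n)$ and majorizes the sum by $\big(\sum_{a\le x,\,(a,q)=1}a^{-1}\big)^2$, to which Lemma 5 applies directly; for the second one writes $2^{\omega(n)}$ as a divisor convolution and evaluates the resulting double sum by partial summation). Your route through $F(s)=\zeta(s)^2\zeta(2s)^{-1}\prod_{p\mid q}(1-p^{-s})(1+p^{-s})^{-1}$ and Perron's formula is a genuinely different, analytic approach. For the second assertion it is sound in outline: the order-five pole, the expansion of the local factor with logarithmic derivative $\sum_{p\mid q}2p\log p/(p^2-1)$, and the key bound $\sum_{p\mid q}(\log p)/(p-1)\ll 1+\log\omega(q)$ are all correct, and the shifted line $\Re s=-1/\log x$ contributes negligibly because the local product there is only $O((q/\varphi(q))^{O(1)})$ while the main term carries the full $(\log x)^4$. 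You do need uniform control of the second and higher derivatives of the local product at $s=0$ (quantities like $\sum_{p\mid q}(\log p)^2/p$), but these obey the same $O((1+\log\omega(q))^2)$ bounds, so that part is routine.

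The genuine gap is in your treatment of the first inequality. After writing $n=dm$ you assert that Lemma 5 handles the inner sum $\sum_{m\le x/d,\,(m,q)=1}1/m$ and that $x\ge\sqrt q$ absorbs the secondary term. But Lemma 5 applied at height $y=x/d$ produces the error $2^{\omega(q)}\log(x/d)/(x/d)$, not $2^{\omega(q)}(\log x)/x$; for $d$ close to $x$ the range of $m$ is short and this error swamps the main term $(\varphi(q)/q)\log(x/d)$, so the factor $\varphi(q)/q$ you need from the inner sum is simply not available there. (A trivial bound on the inner sum for such $d$ loses one factor of $\varphi(q)/q$ and yields only $(\varphi(q)/q)(\log x)^2$.) The contribution of $d>x/(2^{\omega(q)}q/\varphi(q))$ can in fact be salvaged --- it is $O(\omega(q)^2)$, which is $\ll\big((\varphi(q)/q)\log x\big)^2$ since $\omega(q)\ll\log q/\log\log q$ --- but this requires an argument you have not given. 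The cleaner fix is Soundararajan's: use $2^{\omega(n)}\le\sum_{ab=n}1$ so that the whole sum is at most $\big(\sum_{a\le x,\,(a,q)=1}1/a\big)^2$, and apply Lemma 5 once at height $x\ge\sqrt q$, where the secondary term really is $O(2^{\omega(q)}(\log x)/x)=o(1)$.
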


\section{The main term}

Applying Lemma 3 we have
\begin{equation*}
\sum_{\chi(\textrm{mod}\ q)}{\!\!\!\!\!\!}^{\textstyle{*}}\ 
\int_{0}^{T}A(t,\chi)^2dt=M+E,
\end{equation*}
where
\begin{equation*}
M=\frac{\varphi^{*}(q)}{2}\sum_{\mathfrak{a}=0,1}
\sum_{\substack{ab,cd\leq Z\\ac=bd\\(abcd,q)=1}}
\frac{1}{\sqrt{abcd}}\int_{0}^{T}W_{\mathfrak{a}}
\bigg(\frac{\pi ab}{q};t\bigg)W_{\mathfrak{a}}\bigg(\frac{\pi cd}{q};t\bigg)dt,
\end{equation*}
and
\begin{equation*}
E=\sum_{k|q}\varphi(k)\mu(q/k)E(k),
\end{equation*}
with
\begin{equation*}
E(k)=\sum_{\mathfrak{a}=0,1}
\sum_{\substack{ab,cd\leq Z\\ac\equiv\pm bd(\textrm{mod}\ k)\\ac\ne 
bd\\(abcd,q)=1}}\frac{1}{\sqrt{abcd}}\int_{0}^{T}
\bigg(\frac{ac}{bd}\bigg)^{-it}W_{\mathfrak{a}}
\bigg(\frac{\pi ab}{q};t\bigg)W_{\mathfrak{a}}\bigg(\frac{\pi cd}{q};t\bigg)dt.
\end{equation*}

We first estimate the error term $E$. We integrate by parts, using 
Lemma 2.  This produces
\begin{equation*}
E(k)\ll\sum_{\substack{ab,cd\leq Z\\ac\equiv\pm 
bd(\textrm{mod}\ k)\\ac\ne bd\\(abcd,q)=1}}
\frac{1}{\sqrt{abcd}|\log\frac{ac}{bd}|}.
\end{equation*}
We divide the terms $ab,cd\leq Z$ into dyadic blocks 
$Z_1\leq ab<2Z_1$ and $Z_2\leq cd<2Z_2$. From Lemma 4, the
contribution of this range to $E(k)$ is 
\begin{equation*}
\ll\frac{1}{\sqrt{Z_1Z_2}}\frac{Z_{1}Z_{2}}{k}(\log Z_{1}Z_{2})^3=
\frac{\sqrt{Z_{1}Z_{2}}}{k}(\log Z_1Z_2)^3,
\end{equation*}
if $Z_1Z_2>k^{\frac{19}{10}}$, and is 
$O((Z_1Z_2)^{\frac{1}{2}+\varepsilon}k^{-1})$ if $Z_1Z_2\leq
k^{\frac{19}{10}}$. Summing over all such dyadic blocks we have 
\begin{equation*}
E(k)\ll\frac{Z}{k}(\log Z)^3+k^{-\frac{1}{20}+2\varepsilon}.
\end{equation*}
Thus
\begin{equation}\label{3}
E\ll Z2^{\omega(q)}(\log Z)^3\ll qT(\log qT)^3.
\end{equation}

We now turn to the main term $M$. Since $ac=bd$, we can write $a=gr$,
$b=gs$, $c=hs$ and $d=hr$, where $(r,s)=1$. We put $n=rs$. Hence 
\begin{equation*}
M=\frac{\varphi^{*}(q)}{2}\sum_{\mathfrak{a}=0,1}
\sum_{\substack{n\leq Z\\(n,q)=1}}\frac{2^{\omega(n)}}{n}
\sum_{\substack{g,h\leq \sqrt{Z/n}\\(gh,q)=1}}\frac{1}{gh}
\int_{0}^{T}W_{\mathfrak{a}}\bigg(\frac{\pi g^2n}{q};t\bigg)
W_{\mathfrak{a}}\bigg(\frac{\pi h^2n}{q};t\bigg)dt.
\end{equation*}
From Lemma 2 we have $W_{\mathfrak{a}}(\pi g^2n/q;t)=1+
O(g^{1/2}(n/qt)^{\frac{1}{4}})$, whence
\begin{equation*}
M=\varphi^{*}(q)T\sum_{\substack{n\leq Z\\(n,q)=1}}
\frac{2^{\omega(n)}}{n}\bigg(\sum_{\substack{g\leq \sqrt{Z/n}\\(g,q)=1}}
\frac{1}{g}+O(1)\bigg)^2.
\end{equation*}

We split the terms $n\leq Z$ into the cases $n\leq Z_0$ and $Z_0<n\leq
Z$, where $Z_0=Z/9^{\omega(q)}$. In the first case, from Lemma 5 the sum
over $g$ is 
\begin{equation*}
=\frac{\varphi(q)}{2q}\log\frac{Z_0}{n}+O(1+\log \omega(q)),
\end{equation*}
since the first error term in Lemma 5 dominates the second.
Hence the contribution of such values of $n$ to $M$ is
\begin{equation*}
\varphi^{*}(q)T\bigg(\frac{\varphi(q)}{2q}\bigg)^2
\sum_{\substack{n\leq Z_0\\(n,q)=1}}\frac{2^{\omega(n)}}{n}
\bigg(\bigg(\log\frac{Z_0}{n}\bigg)^2+O(\omega(q)\log Z)\bigg).
\end{equation*}
Here we use the fact that $q/\varphi(q)\ll1+\log\,\omega(q)$.  This estimate will
be employed a number of times in what follows, without further comment. In view of Lemma 6 the contribution from terms with $n\le Z_0$ is now
seen to be 
\begin{equation}\label{4}
\frac{\varphi^{*}(q)T}{8\pi^2}\prod_{p|q}
\frac{(1-1/p)^3}{(1+1/p)}(\log Z_0)^4
\bigg(1+O\bigg(\frac{\omega(q)}{\log q}\bigg)\bigg).
\end{equation}

For $Z_0\leq n\leq Z$, we extend the sum over $g$ to all
$g\leq3^{\omega(q)}$ that are coprime to $q$. By Lemma 5, this sum is $\ll
\omega(q)\varphi(q)/q$. Hence the contribution of these terms to $M$ is 
\begin{equation*}
\ll\varphi^{*}(q)T\bigg(\omega(q)\frac{\varphi(q)}{q}\bigg)^2
\sum_{Z_0\leq n\leq Z}\frac{2^{\omega(n)}}{n}\ll \varphi^{*}(q)T
\bigg(\frac{\varphi(q)}{q}\bigg)^4\omega(q)^2(\log Z)^2.
\end{equation*}
Combining this with \eqref{3} and \eqref{4} we obtain
\begin{equation}\label{7}
\sum_{\chi(\textrm{mod}\ q)}{\!\!\!\!\!\!}^{\textstyle{*}}\ 
\int_{0}^{T}A(t,\chi)^2dt=\bigg(1+O\bigg(\frac{\omega(q)}{\log q}\bigg)\bigg)\frac{\varphi^{*}(q)T}{8\pi^2}
\prod_{p|q}\frac{(1-1/p)^3}{(1+1/p)}(\log qT)^4
.
\end{equation}

\section{The error term}

We have
\begin{eqnarray}\label{9}
\sum_{\chi(\textrm{mod}\ q)}{\!\!\!\!\!\!}^{\textstyle{*}}\ 
\int_{0}^{T}B(t,\chi)^2dt&\leq&\sum_{\chi(\textrm{mod}\ q)}
\int_{0}^{T}B(t,\chi)^2dt\nonumber\\
&&\!\!\!\!\!\!\!\!\!\!\!\!\!\!\!\!\!\!\!\!\!\!\!\!\!\!\!\!\!\!
\!\!\!\!\!\!\!\!\!\!\!\!\!\!\!\!\!\!\!\!\!\!\!\!\!\!\!\!\!\!\!\!\!\!\!
=\frac{\varphi(q)}{2}\sum_{\mathfrak{a}=0,1}
\sum_{\substack{ab,cd>Z\\ac\equiv\pm bd(\textrm{mod}\ q)\\(abcd,q)=1}}
\frac{1}{\sqrt{abcd}}\int_{0}^{T}\bigg(\frac{ac}{bd}\bigg)^{-it}
W_{\mathfrak{a}}\bigg(\frac{\pi ab}{q};t\bigg)W_{\mathfrak{a}}
\bigg(\frac{\pi cd}{q};t\bigg)dt.
\end{eqnarray}
Using Lemma 2 and integration by parts, the integral over $t$ is
\begin{equation*}
\ll\frac{1}{|\log\frac{ac}{bd}|}\bigg(1+\frac{ab}{qT}\bigg)^{-2}
\bigg(1+\frac{cd}{qT}\bigg)^{-2}
\end{equation*}
if $ac\ne bd$, and is
\begin{equation*}
\ll T\bigg(1+\frac{ab}{qT}\bigg)^{-2}\bigg(1+\frac{cd}{qT}\bigg)^{-2}
\end{equation*}
if $ac=bd$. Hence the right hand side of \eqref{9} is $O(R_1+R_2)$, where
\begin{equation*}
R_1=\varphi(q)T\sum_{\substack{ab,cd>Z\\ac=bd\\(abcd,q)=1}}
\frac{1}{\sqrt{abcd}}\bigg(1+\frac{ab}{qT}\bigg)^{-2}
\bigg(1+\frac{cd}{qT}\bigg)^{-2},
\end{equation*}
and
\begin{equation*}
R_2=\varphi(q)\sum_{\substack{ab,cd>Z\\ac\equiv\pm 
bd(\textrm{mod}\ q)\\ac\ne
bd\\(abcd,q)=1}}\frac{1}{\sqrt{abcd}|\log\frac{ac}{bd}|}
\bigg(1+\frac{ab}{qT}\bigg)^{-2}
\bigg(1+\frac{cd}{qT}\bigg)^{-2}.
\end{equation*}

To estimate $R_2$, we again break the terms into dyadic ranges
$Z_1\leq ab<2Z_1$ and $Z_2\leq cd<2Z_2$, where $Z_1,Z_2>Z$. By Lemma 4, the
contribution of each such block is 
\begin{equation*}
\ll\frac{\varphi(q)}{\sqrt{Z_1Z_2}}\bigg(1+\frac{Z_1}{qT}\bigg)^{-2}
\bigg(1+\frac{Z_2}{qT}\bigg)^{-2}\frac{Z_1Z_2}{q}(\log Z_1Z_2)^3.
\end{equation*}
Summing over all the dyadic ranges we obtain
\begin{equation}\label{10}
R_2\ll \varphi(q)T(\log qT)^3.
\end{equation}

To handle $R_1$ we argue as in the previous section.  We write
$a=gr$, $b=gs$, $c=hs$ and $d=hr$, where $(r,s)=1$, and we put
$n=rs$. Then 
\begin{equation}\label{5}
R_1\ll\varphi(q)T\sum_{(n,q)=1}\frac{2^{\omega(n)}}{n}
\bigg(\sum_{\substack{g>\sqrt{Z/n}\\ (g,q)=1}}
\frac{1}{g}\bigg(1+\frac{g^2n}{qT}\bigg)^{-2}\bigg)^2.
\end{equation}

We split the sum over $n$ into the ranges $n\leq qT$ and $n>qT$. In the
first case, the sum over $g$ is 
\begin{equation*}
\ll1+\sum_{\substack{\sqrt{Z/n}\leq g\leq\sqrt{qT/n}\\(g,q)=1}}\frac{1}{g}.
\end{equation*}
When $n\leq Z_0$ this is 
\begin{equation*}
\ll \frac{\varphi(q)}{q}\,\omega(q).
\end{equation*}
by Lemma 5.  In the alternative case $n>Z_0$ we extend the sum over
$g$ to include all $g\leq 3^{\omega(q)}$ that are coprime to $q$.
Lemma 5 then gives the same bound as before. Thus the contribution of 
the terms $n\leq qT$ to \eqref{5}, using Lemma 6, is 
\begin{equation}\label{6}
\ll\varphi(q)T\bigg(\frac{\varphi(q)}{q}\omega(q)\bigg)^2
\sum_{\substack{n\leq qT\\(n,q)=1}}\frac{2^{\omega(n)}}{n}\ll 
qT\bigg(\frac{\varphi(q)}{q}\bigg)^5\omega(q)^2(\log qT)^2.
\end{equation}

In the remaining case $n>qT$, the sum over $g$ in \eqref{5} is 
$O(q^2T^2/n^2)$. Hence the contribution of such terms is 
\begin{equation*}
\ll\varphi(q)T\sum_{n>qT}\frac{2^{\omega(n)}}{n}\frac{q^4T^4}{n^4}\ll
\varphi(q)T\log qT.
\end{equation*}
In view of \eqref{10} and \eqref{6} we now have
\begin{equation}\label{8}
\sum_{\chi(\textrm{mod}\ q)}{\!\!\!\!\!\!}^{\textstyle{*}}\ 
\int_{0}^{T}B(t,\chi)^2dt\ll qT\bigg(\frac{\varphi(q)}{q}\bigg)^5
\omega(q)^2(\log qT)^2+\varphi(q)T(\log qT)^3.
\end{equation}

\section{Deduction of Theorem 1}

From Lemma 1 we have
\begin{equation*}
\sum_{\chi(\textrm{mod}\ q)}{\!\!\!\!\!\!}^{\textstyle{*}}\ 
\int_{0}^{T}|L({\scriptstyle{\frac{1}{2}}}+it,\chi)|^{4}dt=
4\sum_{\chi(\textrm{mod}\ q)}{\!\!\!\!\!\!}^{\textstyle{*}}\ 
\int_{0}^{T}\big(A(t,\chi)^2+2A(t,\chi)B(t,\chi)+B(t,\chi)^2\big)dt.
\end{equation*}
The first and third terms on the right hand side are handled by \eqref{7} 
and \eqref{8}. Also, by Cauchy's inequality we have
\begin{equation*}
\sum_{\chi(\textrm{mod}\ q)}{\!\!\!\!\!\!}^{\textstyle{*}}\ 
\int_{0}^{T}A(t,\chi)B(t,\chi)dt\leq
\bigg(\sum_{\chi(\textrm{mod}\ q)}{\!\!\!\!\!\!}^{\textstyle{*}}\ 
\int_{0}^{T}A(t,\chi)^2dt\bigg)^{\frac{1}{2}}
\bigg(\sum_{\chi(\textrm{mod}\ q)}{\!\!\!\!\!\!}^{\textstyle{*}}\ 
\int_{0}^{T}B(t,\chi)^2dt\bigg)^{\frac{1}{2}}.
\end{equation*}
Hence \eqref{7} and \eqref{8} also yield an estimate for the cross
term. Combining these results leads to the theorem.

\end{document}